\documentclass[12pt]{amsart}
\usepackage{epsf} 
\usepackage{amssymb,latexsym, amsmath, amscd, array}
\swapnumbers 
\numberwithin{equation}{section}
\parindent 0pt
\def\m{\medskip}

\theoremstyle{plain} 
\newtheorem{thm}{Theorem}[section]
\newtheorem{cory}[thm]{Corollary} 
\newtheorem{cor}[thm]{Corollary} 
\newtheorem{lem}[thm]{Lemma}  
\newtheorem{lemma}[thm]{Lemma} 
\newtheorem{theorem}[thm]{Theorem}

\newtheorem{prop}[thm]{Proposition} 

\newcommand\theoref{Theorem~\ref} 
\newcommand\lemref{Lemma~\ref} 

\newcommand\propref{Proposition~\ref} 
\newcommand\corref{Corollary~\ref}

\theoremstyle{definition} 
\newtheorem{defin}[thm]{Definition}

\newtheorem{rem}[thm]{Remark}


\def\eps{\varepsilon} 
\def\gf{\varphi}


\def\Z{{\mathbb Z}}

 \def\ar{\mathfrak a}
 \def\ber{\mathfrak b}
\def\cer{\mathfrak c}

\def\Hom{\operatorname{Hom}}
\def\Tor{\operatorname{Tor}}
\def\TC{\operatorname{TC}}

\def\cat{\operatorname{cat}}

\def\zcl{\operatorname{zcl}}
\def\zp{\Z[\pi]}
\def\cd{\operatorname{cd}}

\def\ts{\times}

\def\ot{\otimes}
\def\ov{\overline}

\def\wt{\widetilde}

\long\def\forget#1\forgotten{} %

\begin{document}

\title[On spaces of minimal higher topological complexity.]{On spaces of minimal higher topological complexity.}

\author[Yu.~Rudyak]{Yuli B. Rudyak}

\address{Yuli B. Rudyak \newline
Department of Mathematics, University of Florida \newline
358 Little Hall, Gainesville, FL 32611-8105, USA}
\email{rudyak@ufl.edu}

\vspace{3mm}

\begin{abstract}
Let $\TC_n(X)$ denote the $n$-th topological complexity of a topological space X. 
It is known that $\TC_n(X)$ does not exceed $n-1$ for non-contractible $X$, and so it makes sense to describe spaces $X$ with $\TC_n(X) =n-1$. Grant--Lupton--Oprea proved the following: If X is a nilpotent space with $\TC_n(X)=n-1$ then $X$ is homotopy equivalent to an odd-dimensional sphere. Here we made an attempt to get rid of nilpotency condition and prove the following: If $\TC_n(X) =n-1$ then either $X$ is homotopy equivalent to a sphere of odd dimension or is a homology circle with the infinite cyclic fundamental group.
\end{abstract}

\maketitle

{\small {\it 2020 Mathematics Subject Classification:}
Primary 55M30, Secondary 20J06, 55U25.}

\vspace{3mm}

\section{Introduction}
The $n$th topological complexity $\TC_n(X)$ of a topological space $X$ is the sectional category of a fibrational substitute of the diagonal map $\Delta:X\to X^n$. This concept appeared in \cite{R}, see \cite{BGRT} or \cite[Section 2]{GLO} for greater details. Note that the inequality $\TC_n(X)\geq n-1$ holds for all (non-contractible) $X$ and $n$, and so it seems reasonable to describe the spaces $X$ with $\TC_n(X)=n-1$. It is easy to see $\TC_n(S^{2k+1})=n-1$ for $n\geq 2$ and $k\geq 0$,~\cite{BGRT}. Thus it is quite natural to conjecture that the converse is also true: If $\TC_n(X)=n-1$ then $X$ is homotopy equivalent to an odd-dimensional sphere. Currently we have two theorems below.

\begin{thm}\label{t:glo}\cite[Theorem 3.4]{GLO}
Let X be a connected CW space of finite type. If $X$ is simply connected then $X$ is homotopy equivalent to some sphere $S^{2r+1}$ of odd dimension, $r\geq 1$. If $X$ is not simply connected then $X$ is either acyclic space or homology sphere. Additionally, if $X$ is a nilpotent space or a co-H-space then $X$ is a homotopy circle.
\end{thm}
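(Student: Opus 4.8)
The plan is to wring strong cohomological constraints out of the equality $\TC_n(X)=n-1$ by combining two standard lower bounds for higher topological complexity: the category bound $\cat(X^{n-1})\le\TC_n(X)$ and the $n$-th zero-divisor cup-length bound $\TC_n(X)\ge\zcl_n(X;\F)$, valid over every field $\F$. Here $\zcl_n(X;\F)$ is the length of the longest nonzero product of classes lying in $\ker\bigl(\Delta^*\colon H^*(X^n;\F)\to H^*(X;\F)\bigr)$. First I would feed the category bound into the Künneth product formula for cup-length: since the cup-length of $X^{n-1}$ is $(n-1)$ times that of $X$, the chain $(n-1)\,\mathrm{cl}(X;\F)\le\cat(X^{n-1})\le\TC_n(X)=n-1$ forces $\mathrm{cl}(X;\F)\le 1$, that is, every product of two positive-degree classes in $H^*(X;\F)$ vanishes.

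The heart of the matter is then to show that $\zcl_n(X;\F)\le n-1$ already pins down $\wt H^*(X;\F)$ entirely. For $u\in\wt H^*(X;\F)$ write $\bar u_i\in H^*(X^n;\F)$ for $1\otimes\cdots\otimes u\otimes\cdots\otimes 1$ with $u$ in the $i$-th slot; each difference $\bar u_i-\bar u_1$ is a zero-divisor. I would run two explicit computations. (i) If some $u$ has even degree, then $u^2=0$ (forced by $\mathrm{cl}\le 1$) gives $(\bar u_2-\bar u_1)^2=-2\,\bar u_1\bar u_2$, so the product $(\bar u_2-\bar u_1)^2\prod_{i=3}^n(\bar u_i-\bar u_1)$ collapses to $\pm\,u^{\otimes n}\ne 0$, a nonzero product of $n$ zero-divisors, whence $\zcl_n\ge n$, a contradiction. (ii) If $\wt H^*(X;\F)$ contains two linearly independent classes $u,v$, then, again using that all binary products vanish, the product $\bigl(\prod_{i=2}^n(\bar u_i-\bar u_1)\bigr)(\bar v_2-\bar v_1)$ retains the surviving term $v\otimes u^{\otimes(n-1)}\ne 0$, once more giving $\zcl_n\ge n$. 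The obstruction (i) degenerates in characteristic $2$, but this is harmless: (ii) still holds over $\F_2$ and suffices to exclude $2$-torsion, while the parity of the generator's degree is read off over $\Q$ or an odd prime. The conclusion is that for every field $\F$ the reduced cohomology $\wt H^*(X;\F)$ is either zero or one-dimensional, concentrated in a single odd degree; equivalently $H^*(X;\F)$ is that of a point or an exterior algebra $\Lambda(x)$ on one odd generator.

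Next I would promote this to an integral statement. Since $X$ has finite type, any $p$-torsion in $H_*(X;\Z)$ would produce two reduced classes in adjacent degrees of $H^*(X;\F_p)$, which step (ii) forbids; hence $H_*(X;\Z)$ is torsion-free and equals either $\wt H_*(X;\Z)=0$ or $H_*(S^d;\Z)$ for a single odd $d$. This is exactly the dichotomy of the theorem: $X$ is either acyclic or an odd-dimensional homology sphere. In the simply connected case $d=1$ is impossible (it would force $H_1(X)=\Z\ne 0$), so $d\ge 3$; then $X$ is a simply connected finite-type homology sphere, and Hurewicz together with Whitehead's theorem yields $X\simeq S^{2r+1}$ with $r\ge 1$.

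Finally, the nilpotent / co-$H$ refinement. A nilpotent acyclic space is contractible, which is excluded, so a non-simply-connected nilpotent $X$ must be a homology $d$-sphere with $d$ odd. If $d\ge 3$ then $H_1(X)=0$ makes $\pi_1$ perfect; but a nilpotent perfect group is trivial, forcing simple connectivity, a contradiction. Hence $d=1$, and I would close with the standard fact that a nilpotent homology circle is a homotopy circle: $H_1(\pi_1)=\Z$ and $H_2(\pi_1)=0$ force the finitely generated nilpotent group $\pi_1$ to be $\Z$, after which a covering-space and spectral-sequence argument shows the universal cover is contractible. The co-$H$-space case runs parallel, using that the fundamental group of a co-$H$-space is free. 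I expect the genuine obstacle to be the combinatorial core of the second paragraph, namely producing the explicit nonzero length-$n$ zero-divisor products uniformly across all characteristics, with the characteristic-$2$ bookkeeping the most delicate point; the homotopy-theoretic packaging in the last two paragraphs is by comparison routine.
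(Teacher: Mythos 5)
Your proposal addresses a statement the paper itself never proves: Theorem~\ref{t:glo} is quoted from \cite[Theorem 3.4]{GLO} (with the paper's remark that the error in \cite[Lemma 3.3]{GLO} was fixed by Arora \cite{A}), so the only fair comparison is with the Grant--Lupton--Oprea argument and with the paper's own machinery for its main theorem. Judged that way, your proof is correct in outline and is essentially the GLO argument promoted from $n=2$ to arbitrary $n$: derive cup-length $\leq 1$ over every field from $\cat(X^{n-1})\leq \TC_n(X)=n-1$, then exhibit explicit nonzero products of $n$ zero-divisors with field coefficients to force $\dim_{\F}\wt H^*(X;\F)\leq 1$ with odd-degree generator, and finish with universal coefficients, Hurewicz--Whitehead, and the nilpotent/co-H group theory. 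This generalization has independent value here, because the paper states and later (in the proof of Theorem~\ref{main}) applies the result under the hypothesis $\TC_n(X)=n-1$ for arbitrary $n$, whereas GLO's published theorem concerns $\TC_2$ only; your argument proves the statement at the generality actually used. On the other hand, your route is intrinsically weaker than the paper's own local-coefficient technique: an acyclic space has vanishing reduced cohomology with \emph{any} field coefficients, so every zero-divisor product over a field is zero and gives no information about $\pi_1$. That is exactly why the paper replaces field coefficients by the augmentation ideal $I(\pi)$ and the Berstein--Schwarz class, whose powers detect $\cd(\pi)$ even on acyclic spaces; your method can recover the GLO dichotomy (acyclic or odd homology sphere) but not the paper's improvement that $\pi_1(X)=\Z$ in the non-simply-connected case.

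Two points in your write-up need tightening, and they are precisely the delicate ones. First, in computation (ii) the product $\bigl(\prod_{i=2}^{n}(\bar u_i-\bar u_1)\bigr)\smile(\bar v_2-\bar v_1)$ has \emph{two} surviving K\"unneth terms, not one: besides $\pm\, v\otimes u^{\otimes(n-1)}$ there is $\pm\, u\otimes v\otimes u^{\otimes(n-2)}$, coming from choosing $-\bar u_1$ in the $i=2$ factor and then $\bar v_2$. The conclusion stands because these two tensors are linearly independent whenever $u,v$ are, so no sign conspiracy can cancel them; but this must be said explicitly --- term-by-term bookkeeping of exactly this kind is where the original \cite[Lemma 3.3]{GLO} went wrong, necessitating Arora's correction. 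Second, in the simply connected case you must explicitly rule out the acyclic alternative: a simply connected acyclic CW space of finite type is contractible, hence has $\TC_n=0\neq n-1$, which is what forces the homology-sphere branch and $r\geq 1$; your text passes over this silently. With those two repairs the proposal is a sound, self-contained proof of the cited theorem.
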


\m Note that the proof in \cite[Lemma 3.3]{GLO} has a small error that is corrected in \cite{A}. However, the statement of \theoref{t:glo} is correct.

\m
We have the following slight improvement of \theoref{t:glo}.

\begin{thm}[\theoref{main}]
Let X be a connected CW space of finite type.
If $\TC_n(X) =n-1, n\geq 2$, then $X$ is either homotopy equivalent to some sphere $S^{2r+1}$ of odd dimension, $r\geq 0$ or a homology circle with $\pi_1(X)=\Z$.
\end{thm}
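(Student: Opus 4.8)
The plan is to leverage Theorem~\ref{t:glo} as a black box and focus entirely on removing the nilpotency/co-H-space hypothesis from its final sentence. By that theorem, a space $X$ with $\TC_n(X)=n-1$ falls into one of three cases: it is a simply connected odd sphere (done), or it is not simply connected and is either acyclic or a homology sphere. The simply-connected case already yields the desired conclusion, so the real work is the non-simply-connected case, where I must show that $X$ is either an odd homotopy sphere or a homology circle with $\pi_1(X)\cong\Z$.

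First I would analyze the acyclic possibility. If $X$ is acyclic but not contractible, then $X$ has nontrivial (perfect) fundamental group $\pi=\pi_1(X)$, and I would argue this is incompatible with $\TC_n(X)=n-1$. The tool here is a cohomological lower bound for $\TC_n$ via zero-divisor cup-length or, more robustly, via cohomology with local (group-ring) coefficients: for a space with a perfect, nontrivial fundamental group one can produce enough nonzero products of zero-divisors in $H^*(X^n;\,\text{local})$ to force $\TC_n(X)\ge n$. Concretely, I would use the fact that $\secat$ of the $n$-fold diagonal is bounded below by the nilpotency of the kernel of $H^*(X^n)\to H^*(X)$, and show that a nontrivial perfect group contributes an extra zero-divisor beyond the $n-1$ coming from a single cohomology generator. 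This should rule out acyclic noncontractible $X$ entirely.

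Next I would treat the homology sphere case, where $X$ is a homology $n$-sphere (in the sense of Theorem~\ref{t:glo}) with nontrivial fundamental group $\pi$. The strategy is to pin down $\pi$ using the same $\TC_n=n-1$ constraint together with the structure of $H^*(X;\Z[\pi])$. The key algebraic input is that $\TC_n(X)=n-1$ is an extremely rigid condition: passing to the universal cover $\widetilde X$ and using the Serre spectral sequence or the comparison of local-coefficient cohomology, I would show that $\pi$ must be infinite cyclic and that the only surviving homology is that of a circle. The cleanest route is probably to show that $H_*(X;\Z[\pi])$ (the homology of the universal cover) is concentrated so as to force $\widetilde X\simeq \R$ or a point, and then Poincar\'e--type duality for the homology sphere combined with $\pi$ acting freely identifies $X$ as a homology circle with $\pi_1(X)=\Z$.

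The main obstacle I anticipate is the group-theoretic step: controlling an arbitrary (possibly non-nilpotent) fundamental group purely from the numerical constraint $\TC_n(X)=n-1$. Nilpotency in Theorem~\ref{t:glo} was presumably used to guarantee that a homology sphere with nontrivial $\pi_1$ is already a homotopy sphere or that $\pi$ is severely restricted; without it, I must extract the same conclusion from lower bounds on $\TC_n$ using cohomology with twisted coefficients over $\Z[\pi]$. The delicate point is producing enough zero-divisor products in the presence of a large, non-abelian $\pi$, so that any $\pi$ other than the trivial group (giving the odd sphere) or $\Z$ (giving the homology circle) pushes $\TC_n(X)$ strictly above $n-1$. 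I would expect to need a careful argument about the cup-length in $H^*(\pi;\Z[\pi])$ or about the cohomological dimension of $\pi$, and this is where the bulk of the new technical effort beyond \cite{GLO} will lie.
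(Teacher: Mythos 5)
Your plan has the right overall shape---start from Theorem~\ref{t:glo}, then use zero-divisor cup-length with local coefficients to constrain $\pi_1(X)$---but the core of the proof is exactly the part you defer to ``the bulk of the new technical effort,'' and without it the argument does not close. The paper's mechanism is concrete: it introduces the Berstein--Schwarz class $\ber_X\in H^1(X;I(\pi))$ and shows (via the K\"unneth theorem with local coefficients and the classes $\ar_i=\ber_i-\ber_n$, which are $n$-fold zero divisors) that $\ber_X^2\neq 0$ forces $\zcl_n(X)\geq n$, hence $\TC_n(X)\geq n$ by Schwarz's bound. Thus $\TC_n(X)=n-1$ gives $\ber_X^2=0$, hence $\ber_\pi^2=0$, and by Theorem~\ref{t:power} this yields $\cd(\pi)\leq 1$. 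The decisive group-theoretic input that replaces nilpotency is then the Stallings--Swan theorem: a group of cohomological dimension at most one is free. Your proposal never identifies either of these two ingredients; ``produce enough nonzero products of zero-divisors'' and ``a careful argument about the cohomological dimension of $\pi$'' are restatements of the goal, not steps of a proof. Note also that your treatment of the acyclic case secretly needs the same two facts: the reason a nontrivial perfect $\pi$ is fatal is that it is not free, hence has $\cd(\pi)\geq 2$ by Stallings--Swan, hence $\ber_\pi^2\neq 0$; there is no separate, softer argument. Moreover, for acyclic $X$ there is no ``single cohomology generator'' in ordinary cohomology producing $n-1$ zero divisors, so all $n$ factors must come from local coefficients, which is precisely what the Berstein--Schwarz class supplies.

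Your route for the homology-sphere case is also off-track: the proposed use of the Serre spectral sequence, concentration of $H_*(X;\Z[\pi])$, and Poincar\'e duality to force $\widetilde X\simeq\R$ is both unsubstantiated and unnecessary. Once $\pi$ is known to be free, the paper finishes in two lines: $H_1(X)\cong\pi^{\mathrm{ab}}$ is a nontrivial free abelian group, so $X$ is not acyclic, and an integral homology sphere with $H_1\neq 0$ must be a homology circle; finally, a free group with abelianization $\Z$ is $\Z$ itself. So the homology-sphere case requires no new analytic machinery at all---the entire difficulty of the theorem is concentrated in the implication $\TC_n(X)=n-1\Rightarrow\cd(\pi)\leq 1$, which your proposal acknowledges but does not supply.
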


We emphasize again: It is an open question whether the space $X$ with $\TC_n(X)=n-1, n\geq 3$ and $\pi_1(X)=\Z$ is a homotopy circle.

\m We note the following known fact.

\begin{prop}\label{p:mintc}
If the CW  space $X$ is not contractible then $\TC_n(X)\geq n-1$.  
\end{prop}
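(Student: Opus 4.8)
The plan is to bound the sectional category from below by a cohomological (zero-divisor) cup-length. Write $\Delta=\Delta_n\colon X\to X^n$ for the diagonal, so that $\TC_n(X)=\secat(\Delta)$, and recall the standard fact that for any coefficient ring $R$, or more generally any local coefficient system, one has
\[
\TC_n(X)=\secat(\Delta)\ \ge\ \operatorname{nil}\ker\bigl[\Delta^*\colon H^*(X^n;R)\to H^*(X;R)\bigr];
\]
that is, if $z_1,\dots,z_m$ lie in $\ker\Delta^*$ and $z_1\cdots z_m\ne 0$ in $H^*(X^n;R)$, then $\TC_n(X)\ge m$. Thus it suffices to exhibit $n-1$ such zero-divisors whose product does not vanish.

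First I would treat the case in which $\tilde H^*(X;\mathbf k)\ne 0$ for some field $\mathbf k$. Choose $0\ne u\in H^d(X;\mathbf k)$ with $d>0$, let $p_i\colon X^n\to X$ be the $i$-th projection, and set $\bar u_i=p_i^*u-p_1^*u$ for $i=2,\dots,n$. Each $\bar u_i$ maps to $u-u=0$ under $\Delta^*$, hence lies in $\ker\Delta^*$. To see that $\bar u_2\cdots\bar u_n\ne 0$, expand the product inside the Künneth isomorphism $H^*(X^n;\mathbf k)\cong H^*(X;\mathbf k)^{\otimes n}$: the only summand whose first tensor factor has degree $0$ is $\pm\,1\otimes u\otimes\cdots\otimes u$, which is non-zero because $u\ne 0$, whereas every summand arising from a choice of $-p_1^*u$ in at least one factor carries a positive-degree first slot. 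No cancellation is therefore possible, the product is non-zero, and $\TC_n(X)\ge n-1$.

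It remains to handle the case $\tilde H^*(X;\mathbf k)=0$ for every field $\mathbf k$, i.e.\ $X$ acyclic; since $X$ is a non-contractible CW complex this forces $\pi:=\pi_1(X)\ne 1$. Here ordinary cohomology detects nothing, and I would instead use the Berstein--Schwarz class $\ber\in H^1(X;I)$, the canonical class with coefficients in the augmentation ideal $I=\ker(\zp\to\Z)$, which is non-zero precisely because $\cat(X)\ge 1$. The main obstacle is exactly this step: one cannot form the naive differences $p_i^*\ber-p_1^*\ber$, because $p_i^*$ and $p_1^*$ carry $\ber$ into \emph{inequivalent} $\Z[\pi^n]$-coefficient systems on $X^n$. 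Instead one must work with the canonical $\TC_n$-class in $H^1(X^n;J)$ for the appropriate kernel submodule $J\subset\Z[\pi^n]$, verify that it belongs to $\ker\Delta^*$, and prove that its $(n-1)$-st power is non-zero in the corresponding twisted cohomology of $X^n$. Establishing this twisted non-vanishing---the analogue for $X^n$ of the leading-term argument above, now carried out with tensor powers of $J$ as coefficients---is the delicate point; once it is in place the zero-divisor bound again gives $\TC_n(X)\ge n-1$ and completes the proof.
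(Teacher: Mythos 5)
Your first case---some field $\mathbf{k}$ with $\tilde H^*(X;\mathbf{k})\neq 0$---is correct and complete: the $n-1$ differences $p_i^*u-p_1^*u$ are zero-divisors, and your multidegree (K\"unneth) argument correctly shows their product is non-zero, since the summand $\pm\,1\otimes u\otimes\cdots\otimes u$ lives in a multidegree no other summand can reach. This is the standard zero-divisor argument and differs in form from the paper, which instead uses the bound $\TC_n(X)\geq \cat(X^{n-1})$ of \cite[\S 3]{BGRT} and then bounds $\cat(X^{n-1})$ from below by its cup-length via \cite[Proposition 1.5]{CLOT}.

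The second case---$X$ acyclic but not contractible---is a genuine gap, and you say so yourself: you reduce the proposition to the ``delicate point'' that the $(n-1)$-st power of a canonical class in twisted cohomology of $X^n$ is non-zero, and you do not prove it. That twisted non-vanishing is precisely the hard content here: sorting out which $\Z[\pi^n]$-module structures occur, getting a K\"unneth monomorphism with local coefficients, and producing well-defined zero-divisor classes is what Sections 2--5 of the paper do in order to prove the stronger \theoref{main}, so deferring it leaves \propref{p:mintc} unproved exactly in the case where it is not formal. The paper sidesteps all of this with a covering-space trick that your framework, as set up, cannot imitate: since $\TC_n(X)\geq\cat(X^{n-1})$, and since $\cat$ (unlike $\TC_n$ or the zero-divisor cup-length) is monotone under coverings in the needed direction, $\cat(Y)\geq\cat(\wt Y)$, one picks a non-trivial cyclic subgroup $G\leq\pi_1(X)$ and the covering $\wt X\to X$ with $\pi_1(\wt X)=G$; then $\wt X$ is \emph{not} acyclic (as $H_1(\wt X)\cong G\neq 0$), so the ordinary cup-length argument of your first case applies to $(\wt X)^{n-1}$ and yields $\TC_n(X)\geq\cat(X^{n-1})\geq\cat((\wt X)^{n-1})\geq n-1$. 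If you want to finish your proof, the cleanest repair is to insert the intermediate inequality $\TC_n(X)\geq\cat(X^{n-1})$ and dispose of the acyclic case this way, rather than attempting the twisted-coefficient computation on $X^n$.
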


\begin{proof}
We know that $\TC_n(X)\geq \cat(X^{n-1})$, \cite[\S 3]{BGRT}. Furthermore, if $X$ is not acyclic then cup-length of $X$ is at least 1, and so cup-length of $X^{n-1}$ is at least $n-1$, and hence $\cat (X^{n-1})\geq n-1$ by \cite[Proposition 1.5]{CLOT}. 
If $X$ is acyclic but non-contractible then $\pi_1(X)$ contains a non-trivial cyclic subgroup $G$. Take a covering map $\wt X$ of $X$ with $\pi_1(\wt X)=G$. Now, 
\[
\TC_n(X)\geq \cat (X^{n-1})\geq \cat(\wt {X^{n-1}})=\cat((\wt X)^{n-1})\geq n-1
\] 
because $\cat (Y)\geq\cat (\wt Y)$ for all $Y$ and all its coverings. To prove this known fact, note the following: If $p: \wt Y   \to Y$ is a covering map and a subspace $U\subset Y$ is contractible in $Y$ then $p^{-1}(U)$ is contractible in $\wt Y$, because of the covering homotopy property.
\end{proof}

\m All spaces are assumed to be connected CW spaces of finite type.

\m We use the sign $\simeq$ for  isomorphism of groups and homotopy equivalences of spaces.

\section{Zero Divisors Cup Length}
\begin{defin} Fix a natural number $n\geq 2$. Let $X$ be a connected CW space and $d = d_n : X \to X^n$ the diagonal
map, $d(x) = (x, \cdots, x)$. The {\em $n$-fold zero divisor ideal for $X$} is the kernel of the homomorphism 
\[
d^* : H^*(X^n) \to H^*(X)
\]
induced by the diagonal.
A non-zero element $x\in X^n$ is called a {\em $n$-fold zero divisor element} if $d^*(x) = 0$.  
A {\em $n$-fold zero divisor cup length} of $X$ denoted by
$\zcl_n(X)$ is a maximal number $k$ such that there are classes 
\[
u_i\in H^*(X^n;A_i), i=1, \ldots, k,\, \dim u_i>0
\]
 where the $A_i$'s are the local coefficient systems, with the following property: 
\[
\begin{aligned}
d^* u_i&=0 \text{ for all } i=1, \ldots, k, \text{ and } \\
0\neq &u_1\smile\cdots \smile u_k\in H^*(X^n;A_1\otimes\cdots \otimes A_k).
\end{aligned}
\]
\end{defin}

\begin{thm}\label{t:zcn}
$\TC_n(X)\geq\zcl_n(X)$ for all local coefficient system.
\end{thm}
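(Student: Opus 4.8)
The plan is to deduce this from the classical cohomological lower bound for sectional category (Schwarz genus), adapted to cohomology with local coefficients. Recall that $\TC_n(X)=\secat(p)$, where $p:E\to X^n$ is a fibrational substitute of the diagonal $d$; there is a homotopy equivalence $\iota:X\to E$ with $p\circ\iota\simeq d$, so $d^*=\iota^*\circ p^*$ with $\iota^*$ an isomorphism. Consequently $d^*u=0$ if and only if $p^*u=0$, and the $n$-fold zero divisor ideal $\ker d^*$ is identified with $\ker p^*$. It therefore suffices to show that $\secat(p)$ is at least the cup length of $\ker p^*$ computed with local coefficients.

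First I would record the key observation about sections. Set $s=\secat(p)$ and choose an open cover $X^n=U_0\cup\cdots\cup U_s$ together with sections $\sigma_j:U_j\to E$ of $p$, so that $p\circ\sigma_j\simeq i_j$, the inclusion $i_j:U_j\hookrightarrow X^n$. If $u\in H^*(X^n;A)$ satisfies $p^*u=0$ for a local system $A$, then its restriction to $U_j$ vanishes, since
\[
u|_{U_j}=i_j^*u=\sigma_j^*\,p^*u=0.
\]
By the long exact sequence of the pair $(X^n,U_j)$ with coefficients in $A$, the class $u$ lifts to a relative class $\bar u\in H^*(X^n,U_j;A)$ mapping to $u$ under $H^*(X^n,U_j;A)\to H^*(X^n;A)$.

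Next I would run the relative cup-product argument. Suppose $u_1,\ldots,u_k\in\ker d^*$ realize $\zcl_n(X)=k$, with $u_i\in H^*(X^n;A_i)$ and $0\neq u_1\smile\cdots\smile u_k$, and assume for contradiction that $s<k$, i.e.\ $s+1\leq k$. Lifting $u_i$ to $\bar u_i\in H^*(X^n,U_{i-1};A_i)$ for $i=1,\ldots,s+1$ and using the relative cup product with local coefficients
\[
H^*(X^n,U_0;A_1)\otimes\cdots\otimes H^*(X^n,U_s;A_{s+1})\longrightarrow H^*\bigl(X^n,\,U_0\cup\cdots\cup U_s;\,A_1\otimes\cdots\otimes A_{s+1}\bigr),
\]
we obtain a class lying in $H^*(X^n,X^n;A_1\otimes\cdots\otimes A_{s+1})=0$, because $U_0\cup\cdots\cup U_s=X^n$. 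Its image in $H^*(X^n;A_1\otimes\cdots\otimes A_{s+1})$ is $u_1\smile\cdots\smile u_{s+1}$, which is therefore zero; multiplying by $u_{s+2}\smile\cdots\smile u_k$ forces $u_1\smile\cdots\smile u_k=0$, a contradiction. Hence $s\geq k$, that is, $\TC_n(X)=\secat(p)\geq\zcl_n(X)$.

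The main obstacle I anticipate is bookkeeping rather than conceptual: one must set up the relative cup product in cohomology with local coefficients, $H^*(Y,U;A)\otimes H^*(Y,V;B)\to H^*(Y,U\cup V;A\otimes B)$, and verify its naturality with respect to restriction into absolute cohomology as well as its compatibility with the tensor product of coefficient systems appearing in the definition of $\zcl_n$. Once this naturality is in hand, the section observation and the excision/long-exact-sequence lifting proceed exactly as in the constant-coefficient case treated by Schwarz, and the counting argument above closes the proof.
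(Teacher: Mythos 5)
Your proof is correct and coincides with the paper's approach: the paper proves this theorem simply by citing Schwarz's Theorem 4 \cite{Sv}, and your section-plus-relative-cup-product argument (sections over $U_j$ kill restrictions of classes in $\ker p^*$, lift to relative classes, multiply into $H^*(X^n,X^n;-)=0$) is exactly the standard proof of that cited theorem, carried out with local coefficients. In effect you have supplied in full the details the paper outsources to \cite{Sv}.
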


\begin{proof} 
This is a special case of \cite[Theorem 4]{Sv}.
\end{proof}

\section{Berstein--Schwarz Class}
Consider a pointed connected $CW$ space $(X, x_0)$. Put $\pi=\pi_1(X,x_0)$,  $\zp$ the group ring of $\pi$, and
$I(\pi)$ the augmentation ideal of $\zp$,  i.e. the kernel of the augmentation map $\eps:
\zp \to \Z$. 

\m The cohomological dimension of the group $\pi$ is denoted by $\cd (\pi)$.

\m
Let $p:\wt X\to X$ be the universal covering map for $X$ and $\wt X_0=p^{-1}(x_0)$. The exactness of the sequence
\[
0\to H_1(\wt X, \wt X_0) \to H_0(\wt X_0) \to H_0(\wt X)
\]
yields an isomorphism $H_1(\wt X, \wt X_0)\cong I(\pi)$.  So, we get
isomorphisms
\[
H^1(X,x_0;I(\pi))\cong \Hom_{\zp}(H_1(\wt X,\wt
X_0),I(\pi))\cong\Hom_{\zp}(I(\pi),I(\pi)),
\]
see \cite[Text before Theorem 2.51]{CLOT} or \cite[(3.7)]{Ber}.

\m We denote by $\ov \ber\in H^1(X,x_0;I(\pi))$ the element that
corresponds to the identity $1_{I(\pi)}$ in the right-hand side of the last isomorphism.  (Here we use local coefficient system for the group $I(\pi)$.)
Finally, we set
\begin{equation}\label{eq:bersteinclass}
\ber=\ber_X=j^*\ov \ber\in H^1(X;I(\pi))
\end{equation}
where $j:X=(X,\emptyset)\to (X,x_0)$ is the inclusion of pairs. We call $\ber_X$ the {\em Berstein-Schwarz class} of $X$. 

\begin{rem}
To justify the name "Berstein--Schwarz class", note that the class appeared in~\cite[p.99]{Sv} (implicitly) and~\cite[(3.7)]{Ber}.
\end{rem}

\m Let $B\pi$ denote the classifying space for $\pi$ and put $\ber_{\pi}:=\ber_{B\pi}$. Consider its cup power 
\begin{equation}\label{e:smile}
\ber_{\pi}^m=\ber_{\pi}\smile \cdots \smile \ber_{\pi}\in H^m(B{\pi};I(\pi)^{\otimes m})
\end{equation}
 where $I(\pi)^{\otimes m}=I(\pi){\otimes} \cdots \otimes I(\pi)$ ($m$ factors) and $\otimes$ denotes the tensor product over $\Z$.

\begin{theorem}\label{t:power}
If $\cd (\pi) =m\leq \infty $ then $\ber^n_{\pi}\neq 0$ for all $n\leq m$.
\end{theorem}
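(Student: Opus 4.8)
The plan is to reduce everything to the \emph{universality} of the Berstein--Schwarz class and then to a purely multiplicative observation. Recall the universal property of $\ber_\pi$: for every $\zp$-module $L$ and every class $\alpha\in H^k(B\pi;L)$ with $k\geq 1$, there is a homomorphism of coefficient systems $v\colon I(\pi)^{\otimes k}\to L$ whose induced map $v_*\colon H^k(B\pi;I(\pi)^{\otimes k})\to H^k(B\pi;L)$ satisfies $v_*(\ber_\pi^k)=\alpha$. Thus every positive-dimensional cohomology class of $\pi$, with arbitrary local coefficients, is pushed forward from a cup power of $\ber_\pi$. Granting this, the theorem is quick. By the definition of $\cd(\pi)=m$, for each $n\leq m$ we may choose an integer $k$ with $n\leq k\leq m$ and a $\zp$-module $L$ for which $H^k(B\pi;L)\neq 0$: take $k=m$ when $m<\infty$, and, when $m=\infty$, take any $k\geq n$ in a degree where the cohomology of $\pi$ is nonzero. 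Picking $0\neq\alpha\in H^k(B\pi;L)$ and writing $\alpha=v_*(\ber_\pi^k)$ forces $\ber_\pi^k\neq 0$.

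It then remains to push non-vanishing down from $k$ to $n$, and here multiplicativity of the cup product does the work: we may factor
\[
\ber_\pi^{k}=\ber_\pi^{n}\smile\ber_\pi^{\,k-n}\in H^{k}\bigl(B\pi;I(\pi)^{\otimes k}\bigr),
\]
with the convention $\ber_\pi^{0}=1$ when $n=k$. If $\ber_\pi^{n}$ were zero, then by bilinearity of the cup product the whole product $\ber_\pi^{k}$ would vanish, contradicting $\ber_\pi^{k}\neq0$. Hence $\ber_\pi^{n}\neq 0$ for every $n\leq m$, which is the assertion.

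The substantive part, and the step I expect to be the main obstacle, is the universality property itself; I would prove it by induction on $k$ using the fundamental short exact sequence of $\zp$-modules $0\to I(\pi)\to\zp\xrightarrow{\eps}\Z\to 0$. The base case $k=1$ is exactly the surjection $\Hom_{\zp}(I(\pi),L)\to H^1(B\pi;L)$, $v\mapsto v_*(\ber_\pi)$, coming from the $\Hom$--$\operatorname{Ext}$ long exact sequence of this extension together with $\operatorname{Ext}^1_{\zp}(\zp,L)=0$; indeed the connecting homomorphism is Yoneda multiplication by the extension class, which is $\ber_\pi$. The inductive step is where care is required: one compares the iterated tensor coefficient $I(\pi)^{\otimes k}$ with the $k$-th syzygy of a free resolution of $\Z$ over $\zp$, whose associated $k$-fold extension class is universal, and checks that $\ber_\pi^{k}$ transports to this syzygy class under a suitable coefficient homomorphism. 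The delicate points are the bookkeeping of the tensor factors of $I(\pi)$ and the identification of the cup power with the iterated connecting map; since the middle terms of the relevant sequences are induced modules (hence not cohomologically trivial, as already $H^1(\Z;\zp)\neq 0$ shows), one cannot simply dimension-shift but must track the universal class explicitly.
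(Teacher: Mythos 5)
Your proposal is correct, and it is essentially the paper's own approach: the paper gives no independent argument for this theorem but simply cites \cite[Theorem 3.4]{DR} and \cite[Prop. 34]{Sv}, and your argument---universality of the Berstein--Schwarz class (every $\alpha\in H^k(B\pi;L)$, $k\geq 1$, is $v_*(\ber_\pi^k)$ for some coefficient homomorphism $v\colon I(\pi)^{\otimes k}\to L$) combined with the factorization $\ber_\pi^k=\ber_\pi^n\smile\ber_\pi^{k-n}$---is precisely the proof given in those references. You also correctly flag the genuine technical point in proving universality (induced modules like $\zp\otimes I(\pi)^{\otimes k}$ are free but not cohomologically trivial over infinite groups, so one must track the class through the syzygies of the resolution spliced from $0\to I(\pi)^{\otimes k+1}\to\zp\otimes I(\pi)^{\otimes k}\to I(\pi)^{\otimes k}\to 0$ rather than dimension-shift blindly), which is exactly how the cited proof proceeds.
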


\begin{proof}
See e.g. \cite[Theorem 3.4]{DR}. Cf. \cite[Prop. 34]{Sv}.
\end{proof}

\section{K\"unneth Theorem in the Present of Local Coefficients}

Let me cite the following fact, \cite[Theorem 1.7]{G}.

\begin{thm}\label{t:kunneth}
Let $K_1$ and $K_2$ be path connected CW complexes of finite type and $K_3 = K_1 \times K_2$. Let $G_i$ be left modules over $\pi_1(K_i),i=1,2$ where
$G_3 = G_1 \otimes G_2$ and the action of $\pi_1(K_3)$ on $G_3$ is the "product action". If $G_1$ or $G_2$
is a free abelian group, then there exists a natural exact sequence
\begin{equation}
\CD
\begin{aligned}
0&@>>> H^*(K_1;G_1)\otimes H^*(K_2;G_2) @>\mu>> H^*(K_3;G_3)\\
&\ @>>> \Tor[H^*(K_1;G_1), H^*(K_2;G_2)] @>>> 0
\end{aligned}
\endCD
\end{equation}
where all cohomology is with local coefficients and the maps are of degree 0 and 1.
\end{thm}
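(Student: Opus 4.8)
The plan is to deduce this topological statement from the purely algebraic K\"unneth formula for cochain complexes, via the standard chain model for cohomology with local coefficients. Write $\pi_i=\pi_1(K_i)$, let $p_i:\widetilde{K_i}\to K_i$ be the universal cover, and let $C_*^{(i)}=C_*(\widetilde{K_i})$ be the cellular chain complex, which is a complex of finitely generated free $\Z\pi_i$-modules because $K_i$ has finite type. First I would recall that $H^*(K_i;G_i)$ is computed by the cochain complex $\Hom_{\Z\pi_i}(C_*^{(i)},G_i)$. The universal cover of $K_3=K_1\times K_2$ is $\widetilde{K_1}\times\widetilde{K_2}$ with $\pi_1(K_3)\cong\pi_1\times\pi_2$, and the product cell structure yields an isomorphism $C_*^{(3)}\cong C_*^{(1)}\otimes_\Z C_*^{(2)}$ of complexes of $\Z[\pi_1\times\pi_2]$-modules, the product action being exactly the one induced by $\Z[\pi_1\times\pi_2]\cong\Z\pi_1\otimes_\Z\Z\pi_2$.

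Next I would introduce the external (cross) product of cochains
\[
\mu:\Hom_{\Z\pi_1}(C_*^{(1)},G_1)\otimes_\Z\Hom_{\Z\pi_2}(C_*^{(2)},G_2)\longrightarrow \Hom_{\Z[\pi_1\times\pi_2]}(C_*^{(1)}\otimes_\Z C_*^{(2)},\,G_1\otimes_\Z G_2),
\]
defined on generators by $(\varphi\otimes\psi)(c\otimes d)=(-1)^{|\psi|\,|c|}\,\varphi(c)\otimes\psi(d)$, the Koszul sign making $\mu$ a morphism of cochain complexes. The key observation is that $\mu$ is in fact an \emph{isomorphism} of cochain complexes: since $C_*^{(1)}$ and $C_*^{(2)}$ are degreewise finitely generated free over $\Z\pi_1$ and $\Z\pi_2$, it suffices to check this on single free generators, i.e. for $C_*^{(1)}=\Z\pi_1$ and $C_*^{(2)}=\Z\pi_2$, in which case both sides are canonically $G_1\otimes_\Z G_2$. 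It is worth noting that this chain-level isomorphism does not use the free-abelian hypothesis at all.

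The free-abelian hypothesis enters only at the final step, where I would apply the algebraic K\"unneth theorem to the tensor product of cochain complexes on the left of $\mu$. If, say, $G_1$ is free abelian, then for each finitely generated free module $C_n^{(1)}$ of rank $r_n$ one has $\Hom_{\Z\pi_1}(C_n^{(1)},G_1)\cong G_1^{\,r_n}$, so the cochain complex $\Hom_{\Z\pi_1}(C_*^{(1)},G_1)$ is degreewise free abelian, hence degreewise flat; this is exactly the flatness needed to run the algebraic K\"unneth short exact sequence, in which $\mu$ realizes the degree-zero inclusion $H^*(K_1;G_1)\otimes H^*(K_2;G_2)\hookrightarrow H^*(K_3;G_3)$ and the cokernel is the degree-one $\Tor$ term. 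Naturality of both $\mu$ and the algebraic K\"unneth sequence then gives naturality of the whole sequence.

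As for the main obstacle: the delicate part is the bookkeeping of module structures in the first two steps, namely identifying $C_*^{(3)}$ with $C_*^{(1)}\otimes_\Z C_*^{(2)}$ as a complex over $\Z[\pi_1\times\pi_2]$ and verifying that $\mu$ is a cochain isomorphism with the correct signs and product action. Once this is in place the argument is formal, and the only place the hypothesis is genuinely used is in guaranteeing flatness for the algebraic K\"unneth formula; the conceptual point is to recognize that the free-abelian condition is needed there, and not in the chain-level comparison $\mu$.
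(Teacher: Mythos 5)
The paper itself gives no proof of this theorem: the statement is imported verbatim from Greenblatt \cite[Theorem 1.7]{G}, and the paper's ``proof'' is that citation alone. So the comparison here is between your argument and the literature it replaces. Your proposal is correct, and it is essentially the expected equivariant-chain-level argument: cohomology with local coefficients is computed by $\Hom_{\Z\pi_i}(C_*(\widetilde{K_i}),G_i)$; the universal cover of $K_3$ is $\widetilde{K_1}\times\widetilde{K_2}$, whose cellular chains are $C_*(\widetilde{K_1})\otimes_{\Z} C_*(\widetilde{K_2})$ equivariantly; the cross product $\mu$ is an isomorphism of cochain complexes because the equivariant chain groups are finitely generated free (this is exactly where the finite-type hypothesis enters, since $\Hom$ into a fixed module only interacts well with \emph{finite} direct sums when compared against a tensor product); and the algebraic K\"unneth sequence over $\Z$ finishes the job, with the free-abelian hypothesis supplying flatness. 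Two details you elide deserve a sentence each in a written version. First, the algebraic K\"unneth theorem requires flatness not only of the cochain groups $\Hom_{\Z\pi_1}(C_n^{(1)},G_1)$ but also of their coboundary subgroups; over $\Z$ this is automatic, because subgroups of torsion-free abelian groups are torsion-free and torsion-free equals flat over $\Z$, but it should be stated. Second, identifying the cellular chain complex of $\widetilde{K_1}\times\widetilde{K_2}$ with the tensor product presupposes the product CW structure, which for non-locally-finite complexes requires the compactly generated product topology (harmless, since this changes neither singular nor cellular homology). Neither point is a genuine gap; what your argument buys over the paper's bare citation is a self-contained proof that makes visible precisely where each hypothesis --- finite type and free abelian coefficients --- is actually used.
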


\begin{cor}\label{cc:kunneth}
Let $X$ be a path connected CW space of finite type, $\pi=\pi_1(X)$, and $I(\pi)$ the augmentation ideal of $\pi$. Then we have a monomorphism \label{mu}
\[
\mu\colon (H^*(X;I(\pi))^{\otimes n}\to H^*(X^n; (I(\pi)^{\otimes n}).
\]
\end{cor}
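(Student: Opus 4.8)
The plan is to deduce the corollary from the two-variable Künneth formula (\theoref{t:kunneth}) by induction on $n$, the only real task being to verify that its free-abelian hypothesis is met at every stage. First I would record that the augmentation ideal $I(\pi)$ is a free abelian group: it is the kernel of $\eps\colon\zp\to\Z$, and $\{g-1 : g\in\pi,\ g\neq 1\}$ is a $\Z$-basis of it (equivalently, $I(\pi)$ is a subgroup of the free abelian group $\zp$, hence free). Consequently every tensor power $I(\pi)^{\otimes k}$ is again free abelian, being a tensor product over $\Z$ of free abelian groups.

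For $n=2$ I apply \theoref{t:kunneth} with $K_1=K_2=X$ and $G_1=G_2=I(\pi)$; since $I(\pi)$ is free abelian the hypothesis holds, and the resulting external product
\[
\mu\colon H^*(X;I(\pi))\otimes H^*(X;I(\pi))\to H^*(X^2;I(\pi)^{\otimes 2})
\]
is a monomorphism. For the inductive step I write $X^n=X^{n-1}\times X$ and apply \theoref{t:kunneth} with $K_1=X^{n-1}$, $K_2=X$, $G_1=I(\pi)^{\otimes(n-1)}$, and $G_2=I(\pi)$; both coefficient modules are free abelian, so I obtain a monomorphism
\[
\nu\colon H^*(X^{n-1};I(\pi)^{\otimes(n-1)})\otimes H^*(X;I(\pi))\to H^*(X^n;I(\pi)^{\otimes n}).
\]
Composing $\nu$ with $\mu_{n-1}\otimes\id$, where $\mu_{n-1}$ is the monomorphism produced at stage $n-1$, yields the desired map $\mu=\mu_n$.

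The one genuine subtlety — and the step I expect to be the main obstacle — is that injectivity need not survive when I tensor the monomorphism $\mu_{n-1}$ with the abelian group $H^*(X;I(\pi))$, which is not obviously flat since it may carry torsion. To handle this I would use that the external product in \theoref{t:kunneth} is not merely injective but a \emph{split} monomorphism, the Künneth sequence splitting (unnaturally) in the finite-type setting; its image is then a direct summand. A split monomorphism remains a monomorphism after applying any additive functor, in particular after $-\otimes H^*(X;I(\pi))$, so each $\mu_{n-1}$ is a split mono, the composite $\nu\circ(\mu_{n-1}\otimes\id)$ is again injective, and the induction closes. If one prefers to avoid invoking the splitting, the same conclusion follows by checking degreewise that the image of $\mu_{n-1}$ is a pure subgroup, using the finite-type hypothesis.
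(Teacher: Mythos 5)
Your proposal is correct and follows the same route the paper intends --- the paper's entire printed proof is ``note that $I(\pi)$ is free abelian and apply \theoref{t:kunneth}'' --- but you go further than that proof in an important way. The naive iteration of \theoref{t:kunneth} does hit exactly the obstacle you name: injectivity of $\mu_{n-1}\otimes\id$ is not automatic, since the graded group $H^*(X;I(\pi))$ may have torsion and need not be flat, and the paper passes over this point in silence; identifying it is a genuine improvement in rigor. Your fix via splitting is the right one, and it is true, but note that it is not contained in the statement of \theoref{t:kunneth} as cited: Greenblatt's theorem is quoted only as giving a natural exact sequence, with no splitting claim. To justify it you must look inside the proof: since the spaces are of finite type and the coefficient groups (here the tensor powers of $I(\pi)$) are free abelian, each cochain complex $C^*(K_i;G_i)\cong\Hom_{\Z[\pi_1(K_i)]}(C_*(\wt K_i),G_i)$ is a complex of free abelian groups; freeness of the boundary subgroups then yields a chain map $C^*(K_i;G_i)\to H^*(K_i;G_i)$ (cohomology regarded as a complex with zero differential) inducing the identity on cohomology, and the tensor product of the two such chain maps retracts $\mu$. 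With that lemma made explicit, your induction closes cleanly: if $\rho$ retracts $\nu$ and $\sigma$ retracts $\mu_{n-1}$, then $(\sigma\otimes\id)\circ\rho$ retracts $\nu\circ(\mu_{n-1}\otimes\id)$, so each $\mu_n$ is again split mono. Your fallback via purity is not really an independent alternative --- the natural way to verify purity of the image here is the same splitting --- so I would drop it and state the splitting lemma instead.
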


\begin{proof}
Note that $I(\pi)$ is a free abelian group. Now, the corollary follows from \theoref{t:kunneth}.
\end{proof}

\m Note also the following fact. Given two spaces $X,Y$ with $\pi_1(X)=\pi=\pi_1(Y)$, consider the projections $p:X\ts Y \to X$ and $q: X\ts Y \to Y$, and   $x\in H^*(X;A$ and $y\in H^*(Y;B)$.

\begin{prop}\label{p:smile}
$\mu(x\otimes y)=p^*(x)\smile q^*(y)\in H^*(X\ts Y; A\otimes B)$.
\end{prop}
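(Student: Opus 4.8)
The plan is to read Proposition~\ref{p:smile} as the local-coefficient incarnation of the classical identity that expresses the external cross product as the cup product of the two pullbacks. I would use only two ingredients. The first is the description of the cup product through the diagonal and the cross product: for any space $W$ and classes $a,b$ in (local) cohomology of $W$ one has $a\smile b=\Delta_W^*\mu(a\ot b)$, where $\Delta_W\colon W\to W\ts W$ is the diagonal and $\mu$ is the K\"unneth (cross-product) map. The second is the naturality of $\mu$ with respect to maps of the factors, which is part of the assertion in Theorem~\ref{t:kunneth} that the K\"unneth sequence is natural.

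First I would apply the diagonal formula on $W=X\ts Y$, writing $\Delta\colon X\ts Y\to(X\ts Y)\ts(X\ts Y)$ for its diagonal, to get
\[
p^*(x)\smile q^*(y)=\Delta^*\mu\bigl(p^*(x)\ot q^*(y)\bigr).
\]
Next I would invoke naturality of $\mu$ for the pair of projections $p\colon X\ts Y\to X$ and $q\colon X\ts Y\to Y$, which yields $\mu(p^*(x)\ot q^*(y))=(p\ts q)^*\mu(x\ot y)$, where $p\ts q\colon(X\ts Y)\ts(X\ts Y)\to X\ts Y$ sends $((a,b),(c,d))$ to $(a,d)$. Substituting and using contravariance of $(-)^*$,
\[
p^*(x)\smile q^*(y)=\Delta^*(p\ts q)^*\mu(x\ot y)=\bigl((p\ts q)\circ\Delta\bigr)^*\mu(x\ot y).
\]
Finally I would note that $(p\ts q)\circ\Delta=\id_{X\ts Y}$, since $((p\ts q)\circ\Delta)(a,b)=(p(a,b),q(a,b))=(a,b)$; hence the right-hand side is $\mu(x\ot y)$, which is exactly the claimed formula.

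The step I expect to be the main obstacle is not this formal manipulation but the bookkeeping of the local systems, which must be done carefully for every arrow. I would need to check that $p^*A\ot q^*B$, viewed as a module over $\pi_1(X\ts Y)=\pi_1(X)\ts\pi_1(Y)$, agrees with $A\ot B$ carrying the ``product action'' of Theorem~\ref{t:kunneth}, so that all three cohomology groups in the displays above have the intended coefficients and each of $p^*$, $q^*$, $\Delta^*$, $(p\ts q)^*$ is the map induced on local cohomology by the evident coefficient homomorphism covering the corresponding map of spaces. I would also have to confirm that the diagonal formula $a\smile b=\Delta_W^*\mu(a\ot b)$ is valid in Greenblatt's local-coefficient setting; for this I would fall back on the cochain-level Alexander--Whitney and Eilenberg--Zilber descriptions underlying $\mu$ and $\smile$, together with the naturality in Theorem~\ref{t:kunneth}, rather than on any simply-connected version. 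Once the coefficient systems are matched, the three equalities are forced and the proposition follows.
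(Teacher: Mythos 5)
Your derivation is correct and is essentially the same argument the paper intends: the paper's proof is just a citation to Dold (VII.7.6 and VII.8.15), which are precisely the diagonal description of the cup product and the cross-product naturality that you combine via $(p\ts q)\circ\Delta=\id_{X\ts Y}$. Your explicit check that the product action on $A\ot B$ agrees with the pulled-back systems $p^*A\ot q^*B$ is exactly the bookkeeping needed to transport Dold's integral-coefficient statements to Greenblatt's local-coefficient setting, a point the paper leaves implicit.
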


\begin{proof}
Use \cite[VII.7.6 and VII.8.15]{D}.
\end{proof}

\section{Proof of Theorem 1.2}

Given a connected CW space $X$ of finite type and a particular natural $n$ such that $\TC_n(X)=n-1$. The goal is to prove that $X$ is either homotopy equivalent to some sphere $S^{2r+1}$ of odd dimension, $r\geq 0$ or a homology circle with $\pi_1(X)=\Z$. Since the simply connected case is treated in \cite{GLO} we concentrate the exposition for non-simply connected case. As before, we put $\pi=\pi_1(X)$ and denote  by $I(\pi)$ the augmentation for $\pi$.

\m Recall the Berstein-Schwarz class $\ber=\ber_{X}\in H^1(X;I(\pi))$ in \eqref{eq:bersteinclass}.

\m Let $p_i: X^n\to X$ be the projection on $i$th factor. Consider the induced map 
\[
p^*_i: H^*(X;- )\to H^*(X^n;-).
\]
Define
\[
\ber_i:  =p_i^*(\ber)\in H^*(X^n;I(\pi)).
\]

\forget
\begin{lem}\label{l:proj}
$\cer_i=\mu(\ar_i)$ for $\mu$ in \eqref{mu}.
\end{lem}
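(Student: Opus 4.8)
The plan is to obtain the identity directly from the $n$-fold analogue of \propref{p:smile}. First I would record the $n$-factor version of that proposition: for the product $X^n$ with projections $p_j\colon X^n\to X$ and classes $a_j\in H^*(X;A_j)$, the K\"unneth map satisfies
\[
\mu(a_1\otimes\cdots\otimes a_n)=p_1^*(a_1)\smile\cdots\smile p_n^*(a_n)\in H^*\bigl(X^n;A_1\otimes\cdots\otimes A_n\bigr).
\]
This is proved by iterating \propref{p:smile} one factor at a time, writing $X^n=X\ts X^{n-1}$ and using the associativity and naturality of the external/cup product together with \cite[VII.7.6 and VII.8.15]{D}. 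No new ingredient is needed, since at each stage at least one of the two coefficient modules is free abelian, so \theoref{t:kunneth} applies and the successive K\"unneth maps compose.

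Next I would specialize to the distinguished pure tensor. Writing $\ar_i$ for $1\otimes\cdots\otimes\ber\otimes\cdots\otimes 1$, with the Berstein--Schwarz class $\ber=\ber_X$ in the $i$th slot and the unit $1\in H^0(X;\Z)$ in every other slot, the displayed formula yields
\[
\mu(\ar_i)=p_1^*(1)\smile\cdots\smile p_i^*(\ber)\smile\cdots\smile p_n^*(1).
\]
Since $p_j^*(1)=1\in H^0(X^n;\Z)$ and cup product with the unit is the identity, the right-hand side collapses to $p_i^*(\ber)=\cer_i$, which is exactly the asserted equality.

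The work here is entirely in the coefficient bookkeeping, and that is the step I expect to require the most care. I would check that the target module $\Z\otimes\cdots\otimes I(\pi)\otimes\cdots\otimes\Z$ of $\mu(\ar_i)$ is canonically the $\pi^n$-module $I(\pi)$ obtained by pulling $I(\pi)$ back along the $i$th projection $\pi^n\to\pi$, and that this is precisely the coefficient system carried by $\cer_i=p_i^*(\ber)$; under the canonical isomorphisms $\Z\otimes I(\pi)\cong I(\pi)\cong I(\pi)\otimes\Z$ no twist is introduced, so the two classes live in the same group and the identification is the intended one. The unit classes must be read in constant-coefficient cohomology, and I would confirm that the pairing in \propref{p:smile} indeed combines $\Z$ with $I(\pi)$ via these canonical isomorphisms, after which the equality $\cer_i=\mu(\ar_i)$ follows with no further computation.
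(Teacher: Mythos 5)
Your computation is correct as you have set it up, and it follows the same method as the paper's own proof of this lemma: expand $\mu$ of a pure tensor into a cup product of pullbacks via (an iterated) \propref{p:smile}, then absorb the unit classes $p_j^*(1)$. The coefficient bookkeeping you insist on at the end ($\Z\ot\cdots\ot I(\pi)\ot\cdots\ot\Z\cong I(\pi)$ as a module over $\pi_1(X^n)$, with the action pulled back along the $i$th projection) is exactly what makes the identification legitimate, and your version of the identity is the same kind of statement that the final text actually uses inside \lemref{l:bb}, where $\mu$ of a pure tensor of Berstein--Schwarz classes is identified with the cup product of their pullbacks.

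However, there is a discrepancy you could not have seen: this lemma sits inside the author's \texttt{forget}\dots\texttt{forgotten} block, whose macro erases it from the compiled paper, and in that deleted source the symbols mean something different from what you guessed. There $\ar_i$ denotes the pure tensor $1\ot\cdots\ot 1\ot\ber\ot 1\ot\cdots\ot(-\ber)$, with $\ber$ in the $i$th slot \emph{and} $-\ber$ in the $n$th slot, and $\cer_i$ denotes the difference $p_i^*(\ber)-p_n^*(\ber)=\ber_i-\ber_n$ (which the final text renames $\ar_i$). Under those definitions the asserted identity is false on degree and coefficient grounds: $\mu$ sends that pure tensor to the cup product $\ber_i\smile(-\ber_n)\in H^2(X^n;I(\pi)^{\ot 2})$, not to the degree-one class $\ber_i-\ber_n\in H^1(X^n;I(\pi))$. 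The paper's own deleted proof commits precisely this slip, rewriting the cup product $p_1^*(1)\smile\cdots\smile p_i^*(\ber)\smile\cdots\smile p_n^*(-\ber)$ as the difference $p_i^*(\ber)-p_n^*(\ber)$, and this is presumably why the lemma was excised: the final argument never expresses $\ber_i-\ber_n$ as an image under $\mu$ at all, but instead proves nonvanishing for products of the $\ber_i$'s (\lemref{l:bb}) and transfers it to the classes $\ar_i=\ber_i-\ber_n$ using the map $\gf$ of \lemref{l:phi}. So your proof is sound, but what you proved is the repaired statement; the statement as intended by the discarded source is not provable, and the paper's own proof of it is erroneous.
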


\begin{proof} Because of \propref{p:smile} we have
\[
\begin{aligned} 
\mu(\ar _i)&=\mu(1\otimes 1\otimes\cdots\otimes 1 \otimes \ber \otimes 1\otimes \cdots \otimes -\ber) \\
&=p_1^*(1)\smile\cdots p_{i-1}^*(1)\smile p_i^*(\ber)\smile p_{i+1}^*(1)\smile\cdots \smile p_n^*(-\ber)\\
&=p_i^*(\ber)-p_n^*(\ber)=\cer_i.
\end{aligned}
\]
\end{proof}
\forgotten

\m Similarly to \eqref{e:smile}, we put $\ber^2=\ber\smile\ber$,\ $\ber_i^2=\ber_i\smile\ber_i$, etc.

\begin{lemma}\label{l:bb}
 If $\ber^2\neq 0$ in $H^2(X;I(\pi)^{\otimes 2})$ then the cup product 
\[
\ber_1\smile \ber_2\smile\cdots \smile \ber_{n-2}\smile(\ber_{n-1})^2\in H^{n}( X^n ;I(p)^{\otimes n})
\]
is non-zero. 
\end{lemma}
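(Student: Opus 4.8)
The plan is to recognize the displayed product as the image of a decomposable (tensor–product) class under a Künneth monomorphism, and then to reduce the whole question to a single nonvanishing statement. First I would note that the product only involves the projections $p_1,\dots,p_{n-1}$. Writing $q\colon X^n\to X^{n-1}$ for the projection forgetting the last coordinate, each $\ber_i$ with $i\le n-1$ is the $q$–pullback of the corresponding class on $X^{n-1}$, so the product equals $q^*$ of the analogous product on $X^{n-1}$. Since $q$ admits a section, $q^*$ is injective, and it suffices to prove
\[
\ber_1\smile\cdots\smile\ber_{n-2}\smile(\ber_{n-1})^2\neq 0\in H^n\bigl(X^{n-1};I(\pi)^{\otimes n}\bigr),
\]
where now all $n-1$ factors of $X^{n-1}$ are used.

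Next I would build the Künneth monomorphism in this mixed–coefficient setting. Although \corref{cc:kunneth} is stated with coefficients $I(\pi)$ in every factor, the only input is that $I(\pi)$, and hence every tensor power $I(\pi)^{\otimes k}$, is a free abelian group. Iterating \theoref{t:kunneth} one factor at a time — at each stage the freshly crossed single-$X$ factor carries a free abelian coefficient module, so the left-hand map of the Künneth sequence is injective — produces a monomorphism
\[
\mu\colon H^*(X;I(\pi))^{\otimes(n-2)}\otimes H^*\bigl(X;I(\pi)^{\otimes 2}\bigr)\longrightarrow H^*\bigl(X^{n-1};I(\pi)^{\otimes n}\bigr).
\]
By the iterated form of \propref{p:smile}, together with the fact that $p_{n-1}^*$ is a ring homomorphism (so $p_{n-1}^*(\ber^2)=(\ber_{n-1})^2$), the element $\mu(\ber\otimes\cdots\otimes\ber\otimes\ber^2)$ is precisely the cup product in question. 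As $\mu$ is injective, the product is nonzero if and only if the tensor $\ber\otimes\cdots\otimes\ber\otimes\ber^2$ is nonzero in $H^1(X;I(\pi))^{\otimes(n-2)}\otimes H^2\bigl(X;I(\pi)^{\otimes 2}\bigr)$.

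The hard part is exactly this last nonvanishing. A tensor product of nonzero classes can collapse over $\Z$ in the presence of torsion, so the hypothesis $\ber^2\neq 0$ (with the evident $\ber\neq 0$) does not formally close the argument by itself; this is where I expect the main technical obstacle to lie. The route I would pursue is an induction on the number of $\ber$–factors, with seed $\ber^2\neq 0$, tensoring on one more copy of $(H^1(X;I(\pi)),\ber)$ at each step and using the freeness of $I(\pi)$ and its tensor powers to keep the accumulated class from being annihilated; concretely one wants detecting homomorphisms on each factor whose tensor detects the product, and the freeness of the coefficient modules is what should make such maps available and compatible with crossing on further factors. I anticipate that securing the seed detection cleanly will require the same non-degeneracy of the Berstein–Schwarz class that underlies \theoref{t:power}, and that matching the torsion of the two tensor factors — so that no additional factor kills the class — is the delicate point the proof must confront.
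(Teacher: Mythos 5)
Your reductions are correct, and they are in fact a more careful rendering of the paper's own argument. The paper applies the monomorphism $\mu$ of \corref{cc:kunneth} to the tensor $\ber\otimes\cdots\otimes\ber$ and then writes its image as a cup product in which $p_{n-1}^*$ occurs twice and $p_n^*$ not at all; as literally written that computes $\ber_1\smile\cdots\smile\ber_n$ rather than the class of the lemma, whereas your route --- drop the last factor using the section of $q\colon X^n\to X^{n-1}$, then apply a Künneth monomorphism on $X^{n-1}$ in which one factor carries $I(\pi)^{\otimes 2}$-coefficients and absorbs $p_{n-1}^*(\ber^2)=(\ber_{n-1})^2$ --- is the correct way to set up the identification. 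The problem is that your proposal then stops: after your (valid) reductions the lemma is \emph{equivalent} to the nonvanishing of $\ber^{\otimes(n-2)}\otimes\ber^2$ in $H^1(X;I(\pi))^{\otimes(n-2)}\otimes H^2(X;I(\pi)^{\otimes 2})$, and you leave exactly that unproved, offering only a strategy. So as a proof there is a genuine gap. It sits precisely at the step the paper's proof waves away with ``by the definition of tensor product'' --- a non-justification which, note, never even invokes the hypothesis $\ber^2\neq 0$; you are right that nothing formal closes it, since a tensor product of nonzero elements of abelian groups can vanish.

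What actually closes the gap is a structural property of $\ber$, though not the one you guess: freeness of the coefficient module $I(\pi)$ says nothing about torsion or divisibility in the cohomology groups themselves, and \theoref{t:power} concerns cup powers of $\ber_\pi$, not tensor nondegeneracy. The useful fact is purity. Up to sign, $\ber$ is the image of $1\in H^0(X;\Z)$ under the connecting homomorphism of the coefficient sequence $0\to I(\pi)\to\Z[\pi]\to\Z\to 0$; hence the quotient $H^1(X;I(\pi))/\langle\ber\rangle$ embeds into $H^1(X;\Z[\pi])\cong H^1(\pi;\Z[\pi])$, and the latter group is torsion-free (a known fact, visible from its description via almost invariant integer-valued functions on $\pi$). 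A subgroup with torsion-free quotient is pure, so $\langle\ber\rangle\otimes C\to H^1(X;I(\pi))\otimes C$ is injective for every abelian group $C$. If $\pi$ is infinite, then $\langle\ber\rangle\cong\Z$, so $\ber\otimes c\neq 0$ whenever $c\neq 0$, and your one-factor-at-a-time induction with seed $\ber^2\neq 0$ (this is where the hypothesis finally enters) finishes the proof. If $\pi$ is finite and nontrivial, $\ber$ has order $|\pi|$, purity only detects $\ber\otimes c$ when $c\notin|\pi|\cdot C$, and a separate argument is needed (for instance, restriction to a cyclic subgroup $\Z/p\leq\pi$, where the relevant groups are cyclic of order $p$ and generated by the classes in question); this case cannot be discarded, because the hypothesis $\ber^2\neq 0$ does hold for finite nontrivial $\pi$. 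Neither your sketch nor the paper's one-line assertion deals with either point.
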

   
\begin{proof}
By the definition of tensor product,
\[
0\neq \ber_1\otimes \ber_2\otimes\cdots \otimes \ber_{n-2}\otimes \ber_{n-1}\otimes \ber_{n-1}\in (H^1( X ;I(p))^{\otimes n}.
\]
Hence, because of monomorphicity of $\mu$ and by \propref{p:smile},
\[
\begin{aligned} 
0&\neq\mu(\ber_1\otimes \ber_2\otimes\cdots \otimes \ber_{n-2}\otimes \ber_{n-1}\otimes \ber_{n-1}\\
&=p_1^*(\ber)\smile \cdots \smile p_i^*(\ber)\smile\cdots \smile p_{n-2}^*(\ber)\smile p_{n-1}^*(\ber)\smile p_{n-1}^*(\ber)\\
&=\ber_1\smile \cdots \smile \ber_i\smile\cdots \smile \ber_{n-2}\smile \ber_{n-1}\smile \ber_{n-1}.
\end{aligned}
\]
\end{proof}

\m Take a point $*\in X$ and define
\[
\gf:X^{n}\to  X^n,\, (*,x_{n-1}, \ldots, x_1)\mapsto (x_n, \ldots, x_1).
\]
Consider the induced homomorphism 
\[
\gf^*:H^*(X^n;I(\pi)^{\ot n})\to H^*(X^n;I(\pi)^{\ot n}).
\]
Put $\ar_i=\ber_i-\ber_n$ for $i=1,\ldots, n-1$. Note that all $\ber_i$'s belong to the same group $H^*(X^n;I(\pi))$, and so the difference $\ber_i-\ber_n$ is well-defined.

\begin{lem}\label{l:phi}
$\gf^*(\ber_n)=0$ and $\gf^*(\ar_i)=\ber_i$ for $i<n$.
\end{lem}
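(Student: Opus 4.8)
The plan is to deduce both identities from the functoriality of the pullback, once we record how $\gf$ behaves coordinatewise. By construction $\gf$ collapses the $n$-th factor of $X^n$ to the basepoint $*$ and is the identity on the remaining $n-1$ factors; in terms of the projections this says
\[
p_n\circ\gf\equiv * \quad\text{(the constant map at the basepoint)}, \qquad p_i\circ\gf=p_i \ \text{ for } i<n.
\]
Everything then follows by applying $\gf^*$ to $\ber_i=p_i^*(\ber)$ and using $(p_i\circ\gf)^*=\gf^*\circ p_i^*$.

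For the first identity, write $\gf^*(\ber_n)=\gf^*p_n^*(\ber)=(p_n\circ\gf)^*(\ber)$. Since $p_n\circ\gf$ is the constant map at $*$, it factors as $X^n\to\{*\}\hookrightarrow X$, so $(p_n\circ\gf)^*$ factors through $H^1(\{*\};-)=0$. As $\ber\in H^1(X;I(\pi))$ has positive degree and the first cohomology of a point vanishes for every coefficient module, we get $(p_n\circ\gf)^*(\ber)=0$, i.e. $\gf^*(\ber_n)=0$.

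For the second identity, note that for $i<n$ the equality $p_i\circ\gf=p_i$ gives $\gf^*(\ber_i)=(p_i\circ\gf)^*(\ber)=p_i^*(\ber)=\ber_i$. Since $\gf^*$ is additive and $\ar_i=\ber_i-\ber_n$ is a well-defined difference in the common group $H^*(X^n;I(\pi))$, we conclude
\[
\gf^*(\ar_i)=\gf^*(\ber_i)-\gf^*(\ber_n)=\ber_i-0=\ber_i \qquad (i<n).
\]

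The computation itself is routine; the point requiring care is the bookkeeping of the local coefficient systems. One must check that $\gf$ is compatible with the identifications under which the various pulled-back copies of $I(\pi)$ are regarded as one and the same system on $X^n$ (this is what lets us view $\gf^*$ as an endomorphism of $H^*(X^n;I(\pi))$ and makes the subtraction $\ber_i-\ber_n$ legitimate). Granting this identification, exactly the one used when the $\ber_i$ are said to lie in the same group, both identities are immediate from naturality.
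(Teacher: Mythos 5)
Your proof is correct and takes essentially the same route as the paper's: both arguments rest on the two coordinatewise identities ($p_n\circ\gf$ is constant and $p_i\circ\gf=p_i$ for $i<n$), then apply functoriality of pullback and additivity of $\gf^*$ to get $\gf^*(\ar_i)=\gf^*(\ber_i)-\gf^*(\ber_n)=\ber_i$. Your added details---factoring the constant map through a point to see that $H^1$ vanishes there, and the cautionary remark about identifying the pulled-back coefficient systems---merely spell out what the paper leaves implicit.
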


\begin{proof}
\m Prove that $\gf^*(\ber_n)=0$. Indeed, $p_n\gf$ maps $X^n$ to the point, and so $\gf^*(\ber_n)=\gf^*p_n^*(\ber)=0$.

\m 
Prove that $\gf^*(\ber_i)=\ber_i$ for $i<n$. Indeed, $p_i\gf=p_i$ for $i<n$, and so 
\[
\ber_i=p_i^*(\ber)=\gf^*p_i^*(\ber)=\gf^*(\ber_i).
\]
Finally, $\gf^*(\ar_i)=\gf^*(\ber_i-\ber_n)=\gf^*(\ber_i)=\ber_i$ because $\gf^*(\ber_ n)=0$.
\end{proof}

\begin{lemma}\label{l:big} 
If $\ber^2\neq 0$ in $H^2(X;I(\pi)^{\otimes 2})$ then
\begin{equation}\label{e:big}
\left(\prod_{i=1}^{n-1}\ar_i\right)\smile\ar_{n-1}\neq 0
\end{equation}
in $H^{n}(X^{n};I(\pi)^{\otimes n})$.
\end{lemma}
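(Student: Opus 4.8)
The plan is to reduce \eqref{e:big} to \lemref{l:bb} by pushing the class through the self-map $\gf$ of $X^n$ introduced for \lemref{l:phi}. The guiding observation is that the product in \eqref{e:big} is assembled solely from the classes $\ar_i$ with indices $i\leq n-1$, and \lemref{l:phi} records precisely how $\gf^*$ treats those classes, namely $\gf^*(\ar_i)=\ber_i$ for $i<n$. Since $\gf^*$ is induced by a map of spaces it is a homomorphism of cohomology rings, so it transports a cup product of $\ar_i$'s into the corresponding cup product of $\ber_i$'s, at which point \lemref{l:bb} finishes the job.

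Concretely, I would first rewrite the left-hand side of \eqref{e:big} as
\[
\ar_1\smile\ar_2\smile\cdots\smile\ar_{n-2}\smile(\ar_{n-1})^2,
\]
a cup product of $n$ one-dimensional classes lying in $H^n(X^n;I(\pi)^{\otimes n})$. Applying the homomorphism $\gf^*\colon H^*(X^n;I(\pi)^{\otimes n})\to H^*(X^n;I(\pi)^{\otimes n})$ and invoking multiplicativity together with $\gf^*(\ar_i)=\ber_i$ from \lemref{l:phi}, I obtain
\[
\gf^*\bigl(\ar_1\smile\cdots\smile(\ar_{n-1})^2\bigr)=\ber_1\smile\ber_2\smile\cdots\smile\ber_{n-2}\smile(\ber_{n-1})^2.
\]
The hypothesis $\ber^2\neq 0$ lets \lemref{l:bb} conclude that this image is nonzero in $H^n(X^n;I(\pi)^{\otimes n})$; hence the class in \eqref{e:big} cannot be zero either, since a homomorphism cannot send a zero class to a nonzero one.

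This is really a bookkeeping assembly of the two preceding lemmas, so I do not anticipate a genuine obstacle. The single step worth stating carefully is the multiplicativity of $\gf^*$ with respect to cup products with local coefficients, i.e.\ that $\gf^*(u\smile v)=\gf^*(u)\smile\gf^*(v)$; this is the naturality of the cup-product pairing $H^*(-;A)\otimes H^*(-;B)\to H^*(-;A\otimes B)$ under a map of spaces (compare \propref{p:smile} and the references cited there). Granting that, the displayed chain of identities is immediate and the lemma follows.
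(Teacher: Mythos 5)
Your proposal is correct and follows exactly the paper's own argument: apply $\gf^*$ to the product, use \lemref{l:phi} to identify $\gf^*(\ar_i)=\ber_i$ for $i<n$, invoke \lemref{l:bb} to see the image is nonzero, and conclude the original class is nonzero since a homomorphism cannot send zero to a nonzero element. Your explicit remark about the multiplicativity of $\gf^*$ for cup products with local coefficients is a point the paper leaves implicit, but it is the same proof.
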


\m Here $\prod_{i=1}^{n-1} v_i$ denotes $v_1\smile \cdots \smile v_{n-1}$ where $v_i\in H^n(X^{n};I(\pi))$.

\begin{proof} By \lemref{l:bb} we have
\[
\left(\prod_{i=1}^{n-1}\ber_i\right)\smile\ber_{n-1}=\ber_1\smile \ber_2\smile\cdots \smile \ber_{n-2}\smile(\ber_{n-1})^{\smile 2}\neq 0
\]
in $H^n( X^n ;I(p)^{\otimes n})$.

\m By \lemref{l:phi}, $\gf^*(\ber_n)=0$ and $\gf^*(\ar_i)=\ber_i$ for $i<n$. Hence
\[
\gf^*\left(\left(\prod_{i=1}^{n-1}\ar_i\right)\smile\ar_{n-1}\right)=\left(\prod_{i=1}^{n-1}\ber_i\right)\smile\ber_{n-1}\neq 0
\]
by \lemref{l:bb}. Thus 
\[
\left(\prod_{i=1}^{n-1}\ar_i\right)\smile\ar_{n-1}\neq 0.
\]
\end{proof}

\begin{cory}\label{c:n-1}
If $\ber^2\neq 0$ then $\TC_n(X)\geq n$.
\end{cory}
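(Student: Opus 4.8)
The plan is to deduce the corollary from \lemref{l:big} together with the zero-divisor cup-length bound \theoref{t:zcn}; the only genuinely new point is to recognize the classes produced by \lemref{l:big} as positive-dimensional zero divisors, after which everything is immediate.

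First I would check that each $\ar_i$ lies in the kernel of the diagonal. Let $d=d_n:X\to X^n$ be the diagonal map. Since $p_i\circ d=\id_X$ for every $i$, we get $d^*(\ber_i)=d^*p_i^*(\ber)=\ber$, and this value is independent of $i$. Consequently
\[
d^*(\ar_i)=d^*(\ber_i)-d^*(\ber_n)=\ber-\ber=0,
\]
so $\ar_i\in\ker d^*$ is an $n$-fold zero divisor; moreover $\dim\ar_i=1>0$. Thus the $n$ classes $\ar_1,\ar_2,\ldots,\ar_{n-1},\ar_{n-1}$ are all positive-dimensional zero divisors living in $H^1(X^n;I(\pi))$.

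Next, under the standing hypothesis $\ber^2\neq 0$, \lemref{l:big} asserts precisely that the cup product of these $n$ classes,
\[
\left(\prod_{i=1}^{n-1}\ar_i\right)\smile\ar_{n-1},
\]
is nonzero in $H^n(X^n;I(\pi)^{\otimes n})$. Since $I(\pi)^{\otimes n}$ is exactly the tensor product of the $n$ coefficient systems attached to the factors, this exhibits $n$ positive-dimensional zero divisors whose product survives, so by the definition of zero-divisor cup length $\zcl_n(X)\geq n$. Finally \theoref{t:zcn} yields $\TC_n(X)\geq\zcl_n(X)\geq n$, as claimed.

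As for the main obstacle: essentially all the work has already been carried out in \lemref{l:big} (which in turn rests on the monomorphicity of $\mu$ from \corref{cc:kunneth} feeding into \lemref{l:bb} and \lemref{l:phi}). The remaining content here is pure bookkeeping — confirming that replacing each $\ber_i$ by the difference $\ar_i=\ber_i-\ber_n$ is exactly what converts the nonvanishing product of \lemref{l:big} into a statement about $\ker d^*$, and that the coefficient systems match on both sides. I do not anticipate any serious difficulty beyond this verification.
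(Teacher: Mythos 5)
Your proposal is correct and follows essentially the same route as the paper: verify that each $\ar_i$ is killed by $d^*$ (via $p_i\circ d=\id_X$), invoke \lemref{l:big} for the nonvanishing of the $n$-fold product, conclude $\zcl_n(X)\geq n$, and finish with \theoref{t:zcn}. The only difference is that you spell out details the paper labels as ``clearly,'' which is fine.
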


\begin{proof} Let $d:X\to X^n$ be the diagonal. Clearly $d^*(\ber_i)=\ber$ for all $i$, and so $d^*(\ar)=0$ for all $i$. The non-zero product 
\[
\left(\prod_{i=1}^{n-1}\ar_i\right)\smile\ar_{n-1}
\]
in \lemref{l:big} contains $n$ non-zero factors each of which is annihilated by the diagonal $d^*$. Hence $\zcl_n(X)\geq n$. Now the result follows from \theoref{t:zcn}. 
\end{proof}

\begin{prop}\label{p:n-1}
If $\TC_n(X)=n-1$ then $\cd(\pi)\leq 1$.
\end{prop}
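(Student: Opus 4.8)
The plan is to run the contrapositive of \corref{c:n-1} and then transfer the resulting vanishing from $X$ to its classifying space $B\pi$. Since $\TC_n(X)=n-1<n$, \corref{c:n-1} forces $\ber^2=0$ in $H^2(X;I(\pi)^{\otimes 2})$, where $\ber=\ber_X$ is the Berstein--Schwarz class. It then remains to deduce $\cd(\pi)\leq 1$ from $\ber_X^2=0$, and the natural bridge is \theoref{t:power}, which concerns the class $\ber_\pi=\ber_{B\pi}$ rather than $\ber_X$.

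First I would introduce the classifying map $f\colon (X,x_0)\to (B\pi,*)$ of the universal cover, i.e.\ the map inducing the identity on $\pi_1=\pi$. The Berstein--Schwarz class is natural with respect to basepoint-preserving maps inducing an isomorphism on fundamental groups: under the identification $H^1(-,*;I(\pi))\cong \Hom_{\zp}(I(\pi),I(\pi))$ used to define $\ov\ber$, the map $f^*$ acts as the identity on the right-hand side, so $f^*(\ov\ber_\pi)=\ov\ber_X$ and hence $f^*(\ber_\pi)=\ber_X$. Since $f^*$ commutes with cup products (with the evident pairing of coefficient modules), this yields $f^*(\ber_\pi^2)=\ber_X^2=0$.

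Next I would show that $f^*\colon H^2(B\pi;I(\pi)^{\otimes 2})\to H^2(X;I(\pi)^{\otimes 2})$ is injective, so that $f^*(\ber_\pi^2)=0$ already forces $\ber_\pi^2=0$. This is the one genuinely nontrivial step. Viewing $f$ up to homotopy as a fibration with fiber the universal cover $\wt X$, which is $1$-connected, I would run the Serre spectral sequence with coefficients in an arbitrary local system $M$: since $H^0(\wt X;M)=M$ and $H^1(\wt X;M)=0$, the row $q=1$ vanishes and the bottom row is $E_2^{p,0}=H^p(B\pi;M)$. For $p=2$ every differential into or out of $E^{2,0}_r$ has source or target in a zero group ($E_2^{0,1}=E_2^{1,1}=0$, and the negative columns vanish), so $E_\infty^{2,0}=H^2(B\pi;M)$ is the bottom filtration quotient of $H^2(X;M)$ and the edge homomorphism, which is exactly $f^*$, is injective. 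Equivalently, one may invoke the standard fact that $f\colon X\to B\pi$ is an isomorphism on $H^{\leq 1}$ and a monomorphism on $H^2$ for arbitrary local coefficients.

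Combining the two steps gives $\ber_\pi^2=0$ in $H^2(B\pi;I(\pi)^{\otimes 2})$. By the contrapositive of \theoref{t:power} --- if $\cd(\pi)\geq 2$ then $\ber_\pi^2\neq 0$ --- we conclude $\cd(\pi)\leq 1$, as claimed. I expect the injectivity of $f^*$ on $H^2$ with local coefficients to be the main obstacle, both in the spectral-sequence bookkeeping and in confirming that the coefficient module $I(\pi)^{\otimes 2}$ introduces no complication; the remaining ingredients are the formal naturality of the Berstein--Schwarz class and the two theorems already available.
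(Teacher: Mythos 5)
Your proof is correct and follows essentially the same route as the paper's: the contrapositive of \corref{c:n-1} yields $\ber_X^2=0$ in $H^2(X;I(\pi)^{\ot 2})$, this is transferred to $\ber_\pi^2=0$ using naturality of the Berstein--Schwarz class together with injectivity of $H^2(B\pi;I(\pi)^{\ot 2})\to H^2(X;I(\pi)^{\ot 2})$, and \theoref{t:power} then gives $\cd(\pi)\leq 1$. The only difference is how that injectivity is obtained: the paper realizes $B\pi$ by attaching cells of dimension $\geq 3$ to $X$, so that the relative group $H^2(B\pi,X;I(\pi)^{\ot 2})$ vanishes and restriction is injective, whereas you run the Serre spectral sequence of $\wt X\to X\to B\pi$ --- a heavier but equally valid justification of the same standard fact.
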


\begin{proof}

Because of \corref{c:n-1} we have the following implication: If $\TC_n(X)=n-1$ then  $\ber^2=0$ in $H^2(X;I(\pi)^{\otimes 2})$. We can assume that the classifying space $B\pi$ can be obtained from $X$ by adding cells of dimensional $\geq 3$. Hence, 
$H^2(B\pi;I(\pi)^{\otimes 2})=0$, and so $\ber_{\pi}^2=0$, and the result follows from  \theoref{t:power}.
\end{proof}

\begin{thm}\label{main}
If $\TC_n(X)=n-1$ then either $X$ is homotopy equivalent to an odd-dimensional sphere $S^{2n+1}, n\geq 1$ or an integral homology circle with $\pi_1(X)=\Z$. 
\end{thm}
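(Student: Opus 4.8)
The plan is to feed the cohomological dimension bound of \propref{p:n-1} into the structural dichotomy of \theoref{t:glo}, using the Stallings--Swan theorem as the bridge. First I would dispose of the simply connected case: if $\pi=1$, then the simply connected clause of \theoref{t:glo} gives $X\simeq S^{2r+1}$ for some $r\geq 1$, and there is nothing more to do. So I assume henceforth that $X$ is not simply connected, i.e. $\pi\neq 1$.

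The heart of the argument is then purely algebraic. By \propref{p:n-1} we have $\cd(\pi)\leq 1$, and by the theorem of Stallings and Swan a group of cohomological dimension at most $1$ is free; hence $\pi$ is a \emph{nontrivial} free group, so its abelianization $\pi^{\mathrm{ab}}$ is a nonzero free abelian group. By the Hurewicz isomorphism $H_1(X;\Z)\cong\pi^{\mathrm{ab}}$, so $H_1(X;\Z)$ is free abelian and nonzero. Now I would invoke the non-simply-connected clause of \theoref{t:glo}, which says that $X$ is either acyclic or a homology sphere. The acyclic case and the homology $d$-sphere case with $d\geq 2$ both force $H_1(X;\Z)=0$, hence $\pi^{\mathrm{ab}}=0$; but a free group with trivial abelianization is trivial, contradicting $\pi\neq 1$. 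Therefore the only survivor is that $X$ is a homology $1$-sphere, i.e. a homology circle, and then $H_1(X;\Z)\cong\Z$. Since $\pi$ is free, $\pi^{\mathrm{ab}}$ is free abelian of the same rank; as $\pi^{\mathrm{ab}}\cong\Z$ has rank $1$, the group $\pi$ is free of rank $1$, i.e. $\pi\cong\Z$. This is exactly the desired conclusion: $X$ is a homology circle with $\pi_1(X)\cong\Z$.

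I expect the only nonelementary input to be the appeal to Stallings--Swan; the remainder is bookkeeping with $H_1$ and abelianizations, and indeed the whole improvement over \theoref{t:glo} is concentrated in the single inequality $\cd(\pi)\leq 1$ established earlier. One point that deserves care is that \theoref{t:glo} rests on \cite[Lemma 3.3]{GLO}, whose gap is repaired in \cite{A}, so the acyclic-or-homology-sphere dichotomy should be quoted in its corrected form. It is worth stressing, as the introduction already notes, that this argument stops precisely at ``homology circle with $\pi_1(X)\cong\Z$'' and does not promote $X$ to a genuine homotopy circle, which remains open for $n\geq 3$.
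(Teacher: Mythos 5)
Your proposal is correct and follows essentially the same route as the paper's own proof: Proposition~\ref{p:n-1} gives $\cd(\pi)\leq 1$, Stallings--Swan makes $\pi$ free, and the acyclic-or-homology-sphere dichotomy of Theorem~\ref{t:glo} is then eliminated down to the homology circle case by the fact that $H_1(X)\cong\pi^{\mathrm{ab}}$ is nontrivial free abelian. The only cosmetic difference is that you split on $\pi$ trivial versus nontrivial while the paper splits on $\cd(\pi)=0$ versus $\cd(\pi)=1$, which is the same dichotomy.
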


\begin{proof}
By \propref{p:n-1} we know that either $\cd(\pi)=0$ or $\cd (\pi)=1$. If $\cd(\pi)=0$ then $\pi$ is trivial group, and so $X$ is simply connected. In this case $X$ is homotopy equivalent to an odd-dimensional sphere $S^{2n+1}, n\geq 1$ by~\cite[Theorem 1.3]{GLO}.  

If $\cd(\pi)=1$ then $\pi$ is a nontrivial free group by~\cite{Stal, Swan}. Because of~\cite[Theorem 3.4]{GLO} we know that is either $X$ is acyclic, or $X$ is an integral homology sphere. First, $H_1(X)$ is a free nontrivial abelian group and hence $X$ cannot be acyclic. Furthermore, since $H_1(X)$ is a free abelian group and $X$ is an integral homology sphere, we conclude that $X$ is an integral homology circle. Finally, $\pi=\pi_1(X)=\Z$ since $\pi$ is a free group. 
\end{proof}

{\bf Acknowledgment:} I am grateful to Mark Grant for valuable discussions.

\end{document}